\documentclass[11pt,reqno]{amsart}

\usepackage{amsmath,amsthm,amssymb,amsfonts,mathrsfs,mathtools}
\usepackage{enumitem}
\usepackage{tikz-cd}
\usepackage{hyperref}
\usepackage{graphicx}
\usepackage{stmaryrd} % For ⟦ ⟧ brackets
\usepackage{xcolor}
\usepackage[cmtip,all]{xy}

\theoremstyle{plain}
\newtheorem{theorem}{Theorem}[section]

\newtheorem{cor}[theorem]{Corollary}
\newtheorem{prop}[theorem]{Proposition}

\theoremstyle{definition}
\newtheorem{defn}[theorem]{Definition}
\newtheorem{example}[theorem]{Example}
\newtheorem{notation}[theorem]{Notation}
\theoremstyle{remark}
\newtheorem{remark}[theorem]{Remark}

\numberwithin{equation}{section}

\newcommand{\RR}{\mathbb{R}}

\newcommand{\Z}{\mathbb{Z}}
\DeclareMathOperator{\Hom}{Hom}

\DeclareMathOperator{\Burn}{Burn}
\DeclareMathOperator{\Sp}{Sp}

\newcommand{\uM}{\underline{M}}

\newcommand{\uZ}{\underline{\mathbb{Z}}}

\newcommand{\F}{\mathbb{F}_2}
\newcommand{\uFz}{\underline{\mathbb{F}_2}}

\newcommand{\Rep}{\mathrm{Rep}}

\newcommand{\bs}{\bigstar}
\newcommand{\tH}{\tilde{H}}
\newcommand{\res}{\mathrm{res}}

\usepackage{tikz}
\usepackage{spectralsequences}

\title{Bredon cohomology methods in mass partition problems on spheres}
\author{}
\date{}

\begin{document}
\author{Surojit Ghosh}
\address{Department of Mathematics, Indian Institute of Technology, Roorkee, Uttarakhand-247667, India}
\email{surojit.ghosh@ma.iitr.ac.in; surojitghosh89@gmail.com}
\date{\today}
\subjclass{Primary 55N91, 52A35 secondary 55P91, 55N25}
\keywords{Equivariant maps, Measure partitions, $RO(G)$-graded Bredon cohomology}

\begin{abstract}
We apply $\mathrm{RO}(G)$-graded Bredon cohomology to mass assignment problems, extending classical mass partition methods. Within this framework, we reprove a recent result of Lessure and Soberón: for \(n+1\) mass assignments on \(k\)-dimensional affine subspaces of \(\mathbb{R}^n\), there exists a \(k\)-subspace containing a sphere that simultaneously bisects all measures. This approach highlights a flexible topological framework with potential for broader applications. 
\end{abstract}

\maketitle

\section{Introduction}

Mass-partition problems lie at the crossroads of combinatorial geometry, measure theory, and algebraic topology. 
The classical \emph{ham-sandwich theorem} \cite{Ste38, BZ04} asserts that any $d$ finite Borel measures on $\mathbb{R}^d$ can be simultaneously bisected by a single hyperplane, a result proved by Stone and Tukey \cite{ST42} via the Borsuk--Ulam theorem. 
This paradigm, translating topological symmetry arguments into fair-division results, has inspired a vast literature.  

Subsequent generalizations include the Grünbaum--Hadwiger--Ramos problem on multiple hyperplanes \cite{Gru60, Had66, Ram96}, the necklace-splitting theorem \cite{Alo87}, and various extensions involving spheres, wedges, or fans \cite{RS20}. 
A particularly flexible framework is provided by \emph{mass assignments}, introduced by Mani-Levitska, Vrećica, and Živaljević \cite{MVZ06}, where measures vary continuously with affine subspaces. 
These problems are typically analyzed via the \emph{configuration-space/test-map scheme}, which encodes admissible partitions into a $G$-space $X$ 
and searches for $G$-equivariant maps $X \to S(V)$.  
Nonexistence of such maps yields partition results, with obstructions detected via equivariant cohomology and index theories \cite{FH88, MVZ06, BFHZ16, BFHZ18, AFS22}.   

Our approach applies $\mathrm{RO}(G)$-graded Bredon cohomology for elementary abelian $2$-groups $G$, as developed by Hausmann and Schwede \cite{HS24}. 
In particular, we analyze the induced map of $\tilde{H}_G^\bs(S^0; \uFz)$–modules arising from the test map $X \to S(V)$, where the action of Euler classes plays a decisive role in detecting obstructions.  

We show that these tools can be brought to bear on the problem of bisecting measures by spheres in affine subspaces.  

Concretely, we recover the following recent result of Lessure and Soberón \cite{LS25}: {\it for \(n+1\) mass assignments on \(\ell\)-dimensional affine subspaces of $\mathbb{R}^n$, there exists a \(\ell\)-subspace containing a sphere that simultaneously bisects all the measures. } Intuitively, a mass assignment on the 
$\ell$-dimensional affine subspaces of $\RR^n$
 is a rule that assigns to each such subspace a measure in a way that varies continuously with the subspace. A precise formulation, expressed in terms of sections of an appropriate fibre bundle over the space of all $\ell$-dimensional affine subspaces, can be found in \cite{AFS22}.
While the theorem itself is not new, our approach demonstrates how representation-graded methods naturally encode the required obstructions and suggest a broader applicability to unresolved mass assignment problems.

Beyond recovering the result of Lessure and Soberón, the main contribution of this paper is methodological: 
we show that $\mathrm{RO}(G)$-graded Bredon cohomology provides a flexible and conceptually unified framework for analyzing mass assignment problems.

\begin{notation}\label{not}
 \begin{enumerate}
\item For integers \(n \geq \ell \geq 1\), we denote by \(V_{n,\ell}\) the real Stiefel manifold consisting of ordered \(\ell\)-tuples of orthonormal vectors in \(\mathbb{R}^{n}\). Explicitly,
\(
   V_{n,\ell} \;=\; \Bigl\{ (v_1,\dots,v_\ell)\in (\mathbb{R}^{n})^{\ell} \,\Bigm|\, 
      \langle v_{i},v_{j}\rangle = \delta_{ij} \Bigr\}.
\)
It is a smooth compact manifold of dimension 
\(
   \dim V_{n,\ell} = n\ell - \tfrac{\ell(\ell+1)}{2}.
\)

\item 
We denote by \(\mathrm{sgn}\colon C_{2}\to\{\pm 1\}\) the sign representation.  
For each \(\alpha=(\alpha_{1},\dots,\alpha_{\ell})\in (C_{2})^{\ell} \), define the group homomorphism
\(
   f_{\alpha}\in \Hom ((C_{2})^{\ell}, C_{2}),\mbox{ by }
   f_{\alpha}(g_{1},\dots,g_{\ell})= 
   \alpha_{1}g_{1}+\dots+\alpha_{\ell}g_{\ell}.
\)
The associated one-dimensional real representation is given by
\[
   \chi_{\alpha} := \mathrm{sgn}\circ f_{\alpha},
\qquad
   \chi_{\alpha}(g_{1},\dots,g_{\ell})
      = (-1)^{\alpha_{1}g_{1}+\dots+\alpha_{\ell}g_{\ell}}.
\]
Every real irreducible representation of \((C_{2})^{\ell}\) is of this form.  
In particular, for the standard basis vectors \(e_{i}\in (C_{2})^{\ell}\) we obtain the characters
\(
   \chi_{i} := \chi_{e_{i}}, \quad 1\le i\le \ell,
\)
corresponding to the canonical coordinate projections.

\item We denote by \(\bs\) the \(RO(G)\)-grading, and by \(\ast\) the ordinary integer grading.
\end{enumerate}
\end{notation}

\paragraph{\textbf{Acknowledgement.}} The author would like to thank the referee for their detailed and pertinent comments, which have helped improve the clarity and presentation of this manuscript. 

\section{Preliminaries on Bredon Cohomology}

Ordinary cohomology theories are defined on abelian groups and represented by spectra whose homotopy groups concentrate in degree zero. In the equivariant setting, this role is played by \emph{Mackey functors}. We briefly recall their definition and relation to equivariant cohomology (see \cite{May96} for details).

The \emph{Burnside category} \(\Burn_G\) is the category whose objects are finite \(G\)-sets. For objects \(S, T\), the morphism set is the group completion of spans between \(S\) and \(T\) in the category of finite \(G\)-sets.

\begin{defn}
A \emph{Mackey functor} is an additive functor 
\[
\uM: \Burn_G^{op} \to \mathsf{Ab},
\]
from the opposite Burnside category to abelian groups.
\end{defn}

In this paper, we focus on the group \((C_2)^\ell,\) the elementary abelian 2-group of rank \(\ell\). 
\begin{example}
    For a $G$-module $M$, we define a Mackey functor $\uM$ by the formula $\uM(G/H) = M^H$. In particular, we consider the Mackey functors $\uZ$ and $\uFz$, corresponding to the trivial $G$-modules $\Z$ and $\F$, respectively.
\end{example}

For a real $G$-representation $V$ equipped with a $G$-invariant inner product, we define 
\[
S(V) := \{ v \in V \mid \langle v, v \rangle = 1 \},
\]
the unit sphere in $V$, and $S^V$, the one-point compactification of $V$. In equivariant stable  homotopy theory, the $V$-fold suspension map 
\[
X \mapsto S^V \wedge X
\]
is invertible. We refer to \cite{MM02} for constructing the equivariant stable homotopy category, denoted by $\Sp^G$, which is the homotopy category of $G$-spectra. 

Since the $V$-fold suspension map is invertible, one can define $S^\alpha$ for $\alpha \in RO(G)$, the Grothendieck group completion of the monoid of irreducible representations of $G$. This construction induces an $RO(G)$-grading on homotopy groups. Specifically, for a based $G$-space $X$, the ordinary Bredon cohomology with coefficients in the Mackey functor $\uM$ at grading $\alpha \in RO(G)$ is given by:  
\[
\tilde{H}^\alpha_G(X; \uM) := {\Sp}^G(X, S^\alpha \wedge H\uM),
\]
here $H\uM$ denotes the  Eilenberg--Mac Lane spectrum (cf. \cite{HHR16}) for the  Mackey functor $\uM$ defined as  
\[
\underline{\pi_n}(H\uM) = 
\begin{cases} 
\uM, & n = 0, \\
0, & n \neq 0.
\end{cases}
\]

Let $V$ be a $G$-representation with $V^G=\{0\}$. We denote by $a_V$ the inclusion of the fixed points,  
\[
a_V: S^0 \to S^V.
\]
For a ring spectrum $X$ with a $G$-action, we abuse notation and also denote by $a_V$ its image under the map $S^0 \to X$.

If the representation $V$ contains the trivial representation as a summand, then we set $a_V = 0$.  
Moreover, for any two $G$-representations $V$ and $W$, we have the relation  
\[
a_V a_W = a_{V \oplus W}.
\]

\noindent See \cite[Definition 3.11]{HHR16} for further details.

For an orientable $G$-representation $V: G \to SO(V)$, a choice of orientation induces an isomorphism  
\[
\tilde{H}^{\dim(V)}_G(S^V; \uZ) \cong \Z.
\]
The Thom space of the equivariant bundle $V \to G/G$ is $S^V$. In particular, the restriction map  
\[
\tilde{H}^{\dim(V)}_G(S^V; \uZ) \to \tilde{H}^{\dim(V)}_e(S^{\dim(V)}; \Z)
\]
is an isomorphism.

Utilizing the above isomorphism, for an orientable $G$-representation $V$, we define the orientation class $u_V$ as  
\[
u_V \in \tilde{H}^{V-\dim(V)}_G(S^0 ; \uZ),
\]
the generator that maps to $1$ under the restriction isomorphism. The orientation class satisfies the relations:  
\[
u_{V \oplus 1} = u_V, \quad u_V \cdot u_W = u_{V \oplus W}.
\]
\noindent See \cite[Definition 3.12]{HHR16} for more details. 
It will be convenient to use the notation
\(G^\circ := \mathrm{Hom}(G, C_2) \setminus \{1\}\)
for the set of nontrivial characters of \(G\).

\section{Equivariant formulation}

\subsection{Mass partitions as equivariant maps} 

The mass partition problem asks whether measures on \(\mathbb{R}^{n}\) can be simultaneously bisected by geometric objects such as hyperplanes or spheres.
A key insight in \cite{MVZ06}, is that such problems can be translated into the existence or nonexistence of equivariant maps between representation spheres and certain spaces.

We now describe the equivariant framework relevant for our situation.  Consider the action of \((C_{2})^{\ell}\) on the real Stiefel manifold \(V_{n,\ell}\), given by
\[
   (g_{1},\dots,g_{\ell})\cdot(v_{1},\dots,v_{\ell})
   \;=\; (g_{1}v_{1},\dots,g_{\ell}v_{\ell}),
\]
where each \(g_{i}\in \{\pm 1\}\) acts on \(\mathbb{R}^{n}\) by scalar multiplication.

Assume, for the sake of contradiction, that the following statement fails:

\begin{quote}
\emph{Given \(n+1\) mass assignments on \(\ell\)-dimensional affine subspaces of \(\mathbb{R}^{n}\), 
there exists a \(\ell\)-dimensional subspace and a sphere contained in it that simultaneously bisects all the measures.}
\end{quote}

Following \cite[page 7]{LS25}, this failure is equivalent to the existence of a continuous \((C_{2})^{\ell+1}\)-equivariant map
\[
   S^{n}\times V_{n, \ell} \;\longrightarrow\; S(V),
\]
with
\[
   V \;=\; \mathbb{R}^{n} \oplus \mathbb{R}^{n-1}  \oplus \cdots \oplus \mathbb{R}^{n-\ell}.
\]
Where the action of \((C_2)^{\ell+1}\) on \(S^{n}\times V_{n, \ell}\) as the product
of the antipodal action of \(C_2\) on \(S^n\) and the natural sign-change action of
\((C_2)^{\ell}\) on the Stiefel manifold \(V_{n, \ell}\).

For \(i=0,\dots,\ell\) let
\(
m_i=(1,\dots,1,-1,1,\dots,1)\in (C_2)^{\ell+1}
\)
denote the standard generators, where the entry \(-1\) appears in the \(i+1\)-st position. The action of \((C_2)^{\ell+1}\) on 
\(V
\)
is described as follows. For \((x_0,x_1,\dots,x_\ell)\in V\) and \(i=1,\dots,\ell\),  \(m_i\) changes the sign of every coordinate of \(x_i\) and fixes all other components.
Finally, 
\(m_0
\) changes the sign of every coordinate of $x_0$ and the first coordinate of each of $x_1,\cdots, x_{\ell}$.

Note that, as a $(C_{2})^{\ell+1}$-representation, $V$ decomposes as
\begin{equation}\label{decom}
    V \;\cong\; \chi_{0}^{\oplus n}
\;\oplus\;
\bigoplus_{i=1}^{\ell}
\Bigl(
(\chi_{0}\otimes \chi_{i})
\;\oplus\;
\chi_{i}^{\oplus (n-i-1)}
\Bigr),
\end{equation}
where the decomposition is taken with respect to the irreducible characters
$\chi_i$ defined in \S\ref{not}.

\subsection{An $RO(G)$-graded spectral sequence}

We now describe an $RO(G)$-graded Bredon cohomological spectral sequence that helps up to understand the  \(\tH^\bs_G(S^{0};\uFz)\)-module structure on the 
\(RO(G)\)-graded Bredon cohomology of both the universal space \(EG\) 
and the product space \(S^{n}\times V_{n, \ell}\).

\medskip
\noindent
Before we start, note that for any \(G\)-space \(X\), the canonical collapsing map
\(
   X_{+} \to S^{0}
\)
is \(G\)-equivariant, and therefore induces a 
\(\tH^\bs_G(S^{0};\uFz)\)-module structure on 
\(\tH^\bs_G(X_{+};\uFz)\).

\medskip
\noindent
Consider a \(G\)-CW complex decomposition
\(
   X = \bigcup_{s} X^{(s)}.
\)
If \(X\) is free, each \(s\)-cell is of the form \(G/e\times \mathbb{D}^{s}\).
In this case, the orbit space \(X/G\) inherits a CW structure whose 
\(s\)-skeleton is \(X^{(s)}/G\), and we have
\[
   X^{(s)}/X^{(s-1)} \;\cong\; \bigvee_{e\in I(s)} G_{+}\wedge S^{s}.
\]

\medskip
\noindent
Following \cite{BG21}, one obtains an \(RO(G)\)-graded spectral sequence:

\begin{prop}\label{HFPSS}
For a free $G$-CW complex $X$, there exists a spectral sequence
\[
   E^{s,t}_{2}(\alpha)
   = H^{s}\!\left(X/G; \pi_{t}\!\left(S^{-\dim(\alpha)}\wedge H\F\right)\right)
   \;\Longrightarrow\;
   H^{s-t-\alpha}_{G}(X;\uFz),
\]
with differentials
\[
   d_{r}\colon
   E^{s,t}_{r}(\alpha)
   \longrightarrow
   E^{s+r,t-r+1}_{r}(\alpha).
\]
These spectral sequences assemble (as \(\alpha\) varies) into a multiplicative
\(RO(G)\)-graded spectral sequence
\[
   E^{s,\alpha}_{2}
   =
   H^{s}\!\left(X/G;\pi_{0}\!\left(S^{-\dim(\alpha)}\wedge H\F\right)\right)
   \;\Longrightarrow\;
   H^{s-\alpha}_{G}(X;\uFz),
\]
where \(s\in\Z\) and \(\alpha\in RO(G)\).
\end{prop}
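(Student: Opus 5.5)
We will construct the spectral sequence by filtering $X$ by its skeleta and applying the cohomology theory represented by $S^{-\alpha}\wedge H\uFz$. For fixed $\alpha\in RO(G)$, consider the cohomology theory
\[
E^*(Y) := \Sp^G\bigl(\Sigma^\infty Y_+,\ \Sigma^{*} S^{-\alpha}\wedge H\uFz\bigr)
\]
on $G$-spaces. The skeletal filtration $X^{(0)}\subset X^{(1)}\subset\cdots$ produces long exact sequences of the pairs $(X^{(s)},X^{(s-1)})$, which assemble into an exact couple. This exact couple gives a spectral sequence with $E_1^{s,t} = E^{s-t}(X^{(s)}/X^{(s-1)})$ and differentials of the stated bidegree $(r,1-r)$. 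Convergence to $E^{*}(X)=H^{*-\alpha}_G(X;\uFz)$ is standard when $X$ is finite-dimensional (as for $S^n\times V_{n,\ell}$). For $EG$ it follows from the Milnor sequence, since the $E_1$-term in each total degree is finite-dimensional over $\F$, so the $\lim^1$ terms vanish.

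The key computation is the $E_1$-term. Since $X$ is free, $X^{(s)}/X^{(s-1)}\cong\bigvee_{I(s)} G_+\wedge S^s$. The Wirthmüller / induction–restriction adjunction gives
\[
\Sp^G\bigl(G_+\wedge S^s,\ \Sigma^{s-t}S^{-\alpha}\wedge H\uFz\bigr)\cong \Sp^e\bigl(S^{s},\ \Sigma^{s-t}S^{-\dim\alpha}\wedge H\F\bigr)=\pi_{t}\bigl(S^{-\dim\alpha}\wedge H\F\bigr).
\]
Here we use that the underlying nonequivariant spectrum of $S^{-\alpha}$ is $S^{-\dim\alpha}$ and that the underlying spectrum of $H\uFz$ is $H\F$, whose $G$-action on homotopy is trivial because the coefficients are the trivial module. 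Hence $E_1^{s,t}$ is the group of cellular cochains on $X/G$ with values in $\pi_t(S^{-\dim\alpha}\wedge H\F)$. Because the coefficient system is constant, the cochains are untwisted, with $\mathbb{Z}[G]$ acting through the augmentation.

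The main obstacle is to identify $d_1$ with the cellular coboundary of $X/G$. Under the adjunction, $d_1$ is induced by the attaching degrees of the $G$-cells, which are elements of $\Z[G]$. These act on $\pi_t(S^{-\dim\alpha}\wedge H\F)$ through the action of $G$ on the underlying spectrum. We must verify that this action is trivial. For $H\F$ this is clear. For $S^{-\alpha}$, $g$ acts on the underlying sphere with degree $\pm1$, and this sign is $1$ mod $2$, so the action is trivial with $\F$-coefficients. The matrix of $d_1$ is therefore the augmented boundary matrix, which is exactly the cellular coboundary of $X/G=\bigcup X^{(s)}/G$. Consequently $E_2^{s,t}=H^s(X/G;\pi_t(S^{-\dim\alpha}\wedge H\F))$. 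Since $\pi_t$ is nonzero only for $t=-\dim\alpha$, the sequence is concentrated on one line up to regrading. Reindexing by $t$ absorbed into $\alpha$ yields the $RO(G)$-graded form with $\pi_0$.

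Finally, for multiplicativity, we will use that $H\uFz$ is a commutative ring spectrum, so the $S^{\bs}\wedge H\uFz$ form an $RO(G)$-graded ring. The diagonal $X\to X\times X$ is cellularly approximated to be compatible with the skeletal filtrations, and the product filtration of $X\times X$ gives pairings $E_r(\alpha)\otimes E_r(\beta)\to E_r(\alpha+\beta)$ satisfying the Leibniz rule. On $E_2$ this pairing is the cup product on $H^*(X/G;\F)$ tensored with the multiplication on coefficients. The argument follows \cite{BG21}, and we will cite it for the standard pairing-of-exact-couples details.
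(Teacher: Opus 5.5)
Your proposal is correct and is essentially the paper's argument. The paper sets up the same skeletal filtration with $X^{(s)}/X^{(s-1)}\cong\bigvee G_+\wedge S^s$ and defers to \cite{BG21}, whose construction is exactly your exact couple, using the induction--restriction identification of $E_1$ and the fact that $G$ acts trivially on $\pi_t(S^{-\dim\alpha}\wedge H\F)$ to get $E_2=H^s(X/G;\pi_t(S^{-\dim\alpha}\wedge H\F))$.

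One harmless bookkeeping slip: with your indexing $E_1^{s,t}=E^{s-t}(X^{(s)}/X^{(s-1)})$, the differentials have bidegree $(r,r-1)$, not the stated $(r,1-r)$; the statement itself mixes conventions here. Since $E_2$ is concentrated on the single row $t=-\dim\alpha$, nothing downstream is affected.
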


\medskip
\noindent
The  $E_2$-page is concentrated
entirely along the single horizontal line
\(
t = -\dim(\alpha).
\) 
For $r \ge 2$, if $E^{s,t}_r(\alpha)$ is nonzero then necessarily
$t = -\dim(\alpha)$, while the target bidegree satisfies
$t - r + 1 \neq -\dim(\alpha)$. Hence the target group is zero, and therefore
\(
d_r = 0  \text{ for all } r \ge 2.
\)
Thus the spectral sequence collapses at the $E_2$-page.

The multiplicative structure in Proposition~\ref{HFPSS} shows that 
every element can be written as a product of the form
\[
   H^{s}(X/G) \otimes 
   \pi_{0}\!\left(S^{-\dim\alpha}\wedge H\F\right).
\]

The groups $\pi_0\!\left(S^{-\dim(\alpha)} \wedge H\F\right)$ assemble, as $\alpha$
varies, into a graded ring of the form
\[
\bigotimes_{\chi\in \Hom(G, C_2)} \F\bigl[u_{\chi}^{\pm}\bigr],
\] hence we have:
\begin{cor}\label{uni}
The Bredon cohomology of \(EG\) is
\[
   \tH^{\bs}_{G}\!\left(E G_{+};\uFz\right)
   \;\cong\;
   \F\!\left[
      x_{i},\,u_{\chi}^{\pm}
      \;\middle|\;
      \chi\in G^\circ ,\;1 \le i\le\ell
   \right],
\]
with \(|x_{i}|=1\).
\end{cor}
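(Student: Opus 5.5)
Apply Proposition~\ref{HFPSS} to the free $G$-CW complex $X = EG$, with $G=(C_2)^\ell$. The orbit space $EG/G = BG \simeq (\mathbb{RP}^\infty)^\ell$ has, by the K\"unneth theorem, mod $2$ cohomology
\[
H^\ast(BG;\F) \cong \F[x_1,\dots,x_\ell], \qquad |x_i| = 1,
\]
where $x_i$ is the pullback of the generator of $H^1(\mathbb{RP}^\infty;\F)$ along the $i$-th projection.

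The coefficient system on the $E_2$-page is $\pi_0(S^{-\dim(\alpha)}\wedge H\F)$. Since $X$ is free, each orbit is $G/e$, so only the underlying nonequivariant group enters. The underlying spectrum of $S^{-\alpha}$ is $S^{-\dim\alpha}$, so the coefficient group is $\F$ in the single degree $t=-\dim\alpha$. It is untwisted, because mod $2$ coefficients see no orientation character.

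Hence, for each $\alpha\in RO(G)$, the $E_2$-term is one copy of $H^\ast(BG;\F)$ placed on one line. By the collapse argument given after Proposition~\ref{HFPSS}, all differentials vanish. Moreover, each total degree $s-t-\alpha$ receives a contribution from exactly one $s$, so there are no extension problems in the filtration. The result is
\[
\tH^{\bs}_G(EG_+;\uFz) \cong H^\ast(BG;\F)\otimes \bigoplus_{\alpha\in RO(G)} \F\cdot\iota_\alpha ,
\]
where $\iota_\alpha$ is a generator in degree $\alpha - \dim\alpha$.

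Next, identify the generators $\iota_\alpha$ multiplicatively. Since $RO(G)$ is free abelian on the irreducible characters $\chi\in\Hom(G,C_2)$, it suffices to find invertible classes in degree $\chi-1$ for each nontrivial $\chi$. For the trivial character the degree $\chi - 1$ is zero, which is why only $\chi\in G^\circ$ appears in the answer.

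For nontrivial $\chi$, take $u_\chi$: the mod $2$ reduction of the orientation class, defined for any representation with $\F$ coefficients. Its restriction to the underlying spectrum is the generator $1$. On $EG_+$ it is invertible, because the Thom class of the line bundle $EG\times_G\chi$ is a Thom isomorphism with mod $2$ coefficients; equivalently, $\tH^\bs_G(EG_+\wedge S^\chi)\cong \tH^{\bs}_G(EG_+)$ shifted. The inverse $u_\chi^{-1}$ in degree $1-\chi$ is then the Thom class of the inverse suspension. Using multiplicativity of the spectral sequence together with $u_V u_W = u_{V\oplus W}$, the product $\prod_\chi u_\chi^{n_\chi}$ detects $\iota_\alpha$ for $\alpha-\dim\alpha=\sum n_\chi(\chi-1)$.

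This gives a ring map
\[
\F[x_i, u_\chi^{\pm}] \to \tH^\bs_G(EG_+;\uFz).
\]
It is a bijection on each graded piece, by comparing with the $E_\infty$-page.

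The main obstacle is this multiplicative identification: showing that $u_\chi$ is detected by the generator of $E_2^{0,\alpha}$ and is invertible, and that there are no hidden multiplicative extensions. The latter relies on each total degree having only one nonzero filtration. For invertibility, I would first restrict to the subgroup $\ker\chi$ and use the fact that $EG_+\wedge S^\chi \simeq EG_+\wedge S^1$ after an equivariant Thom isomorphism. The classes $x_i$ are ordinary cohomology classes of $BG$, lifted via $H^s(EG/G)\cong \tH^s_G(EG_+)$ in integer grading; one could also express them as $a_{\chi_i}u_{\chi_i}^{-1}$, though this is not needed.
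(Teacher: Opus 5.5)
Your proposal is correct and follows the paper's route: apply Proposition~\ref{HFPSS} to $X=EG$ with $EG/G=BG\simeq(\mathbb{RP}^\infty)^\ell$, use the one-line collapse, and identify the coefficient groups with the Laurent ring on the $u_\chi$, $\chi\in G^\circ$. You also supply two details the paper leaves implicit: there are no additive or multiplicative extensions because each total degree has a single filtration, and $u_\chi$ is invertible on $EG_+$ by the mod $2$ Thom isomorphism (multiplication by $u_\chi$ is an isomorphism on $G_+$, hence on every free $G$-CW complex), though note that $EG_+\wedge S^\chi$ and $EG_+\wedge S^1$ are only identified cohomologically this way, not $G$-equivalent.
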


\begin{remark}
We identify the class \(a_{\chi_i}\) with \(x_i\,u_{\chi_i}\).
For further details, see \cite[page~7]{BG21}.
\end{remark}

\medskip
\noindent
Since \(G\) acts freely on \(S^{n}\times V_{n, \ell}\),
the same method used in Corollary~\ref{uni} yields:

\begin{cor}\label{cbrecoh}
The \(RO(G)\)-graded Bredon cohomology of the ordered configuration space 
\(S^{n}\times V_{n, \ell}\) is
\[
   \tH^{\bigstar}_{G}\!\left(S^{n}\times V_{n, \ell};\uFz\right)
   \;\cong\;
   H^{\ast}\!\left(\RR P^{n}\times 
     G_{\RR}(1,\dots,1,n-\ell);\F\right)
   \;\otimes\;
   \bigotimes_{\chi\in G^\circ} \F\bigl[u_{\chi}^{\pm}\bigr].
\]
\end{cor}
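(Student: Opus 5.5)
We apply Proposition~\ref{HFPSS} to the free $G$-space $X = S^{n}\times V_{n,\ell}$, where $G=(C_2)^{\ell+1}$. The argument has three steps: identify the orbit space, invoke the collapse, and resolve the multiplicative extensions.

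First we identify the orbit space $X/G$. The first factor $C_2$ acts only on $S^n$, by the antipodal map, and $(C_2)^{\ell}$ acts only on $V_{n,\ell}$, by sign changes of the individual vectors. Hence
\[
X/G \cong \RR P^{n}\times \bigl(V_{n,\ell}/(C_2)^{\ell}\bigr).
\]
An orthonormal $\ell$-frame modulo signs of its vectors is the same as an ordered $\ell$-tuple of mutually orthogonal lines in $\RR^n$. Such a tuple is equivalent to a partial flag datum whose complement is the orthogonal $(n-\ell)$-plane. This identifies $V_{n,\ell}/(C_2)^{\ell}$ with the flag manifold $G_{\RR}(1,\dots,1,n-\ell)$ of orthogonal decompositions $\RR^n=L_1\oplus\cdots\oplus L_\ell\oplus W$ with $\dim L_i=1$ and $\dim W=n-\ell$. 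We also check that the action is free: a group element fixing a frame must fix each $v_i$, which forces each $g_i=1$, and the antipodal map on $S^n$ has no fixed points. Since $X$ is a compact smooth manifold with a free action of a finite group, it admits a free $G$-CW structure, so Proposition~\ref{HFPSS} applies.

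Second, by the discussion after Proposition~\ref{HFPSS}, the $E_2$-page is concentrated on the line $t=-\dim\alpha$, so the spectral sequence collapses at $E_2$. For each $\alpha\in RO(G)$ this gives
\[
\tH^{\bs}_G(X_+;\uFz)\cong H^*(X/G;\F)\otimes\pi_0\bigl(S^{-\dim\alpha}\wedge H\F\bigr).
\]
Because each $\alpha$ contributes a single line, there is only one nonzero filtration quotient in each total degree, so no additive extension problems arise.

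Third, we determine the ring structure. The spectral sequence is multiplicative, and the coefficient groups assemble into $\bigotimes_{\chi}\F[u_\chi^{\pm}]$, exactly as in Corollary~\ref{uni}. The classes $u_\chi$ are genuine elements of $\tH^{\chi-1}_G(S^0;\uFz)$, pulled back along $X_+\to S^0$. They restrict to invertible elements on the free part: with $\F$ coefficients on a free space, $S^{\chi}\wedge X_+\simeq S^{1}\wedge X_+$ after smashing with $H\F$, because $\F$ has no sign issues. Hence the $u_\chi$ act invertibly. The ordinary classes come from the edge map $H^*(X/G;\F)\to \tH^*_G(X_+;\uFz)$, which is a ring map since $X$ is free.

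Choosing these lifts gives a ring map from the stated tensor product to $\tH^{\bs}_G(X_+;\uFz)$. This map induces an isomorphism on associated graded objects, and since the filtration is trivial in each degree, it is an isomorphism. I expect this multiplicative-extension step to be the main obstacle: one must check that the $u_\chi^{\pm}$ really lift and commute with the ordinary classes without correction terms. I would handle it by comparing with $EG$ via the classifying map $X\to EG$. That map carries the $u_\chi^{\pm}$ of Corollary~\ref{uni} to $X$, which reduces the question to invertibility, and invertibility holds there.
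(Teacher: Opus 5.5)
Your proposal is correct and follows the paper's own route: apply Proposition~\ref{HFPSS} to the free $G$-space $S^n\times V_{n,\ell}$, whose orbit space is $\RR P^n\times G_{\RR}(1,\dots,1,n-\ell)$, use the collapse of the one-row $E_2$-page, and read off the ring structure from multiplicativity, exactly as in Corollary~\ref{uni}. Your extra step of pulling back the units $u_\chi^{\pm}$ along the classifying map $X\to EG$ makes explicit what the paper leaves implicit; it is the same comparison the paper uses later in Proposition~\ref{action-2}.
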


\section{Cohomology of certain quotient of $O(n)$}
In this section, we investigate the cohomology of certain quotient spaces that naturally arise in the study of Stiefel and flag manifolds.  
We begin by recalling classical computations of the cohomology rings of Stiefel manifolds, orthogonal groups, and their classifying spaces.  

\begin{prop}[\cite{Bor53}]
Let $1 \leq \ell \leq n$. Then:
\begin{enumerate}
    \item \( H^\ast (V_{n,\ell}; \F) \cong \F[z_{n-\ell}, \ldots, z_{n-1}] \).
    \item \( H^\ast (O(n); \F) \cong \F[y_1, \ldots, y_{n-1}] \).
    \item \( H^\ast (BO(n); \F) \cong \F[\omega_1, \ldots, \omega_n] \).
\end{enumerate}
\end{prop}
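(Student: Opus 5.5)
The plan is to treat the three parts in the order (3), (1), (2), since (2) is the special case $\ell = n$ of (1) once the extra component of $O(n)$ is accounted for. The description $\F[z_{n-\ell},\dots,z_{n-1}]$ in (1) and (2) should be read as Borel's \emph{simple system of generators}: additively the cohomology is the exterior algebra on the $z_i$ (with $|z_i| = i$), and multiplicatively $z_i^2 = z_{2i}$ when $2i$ is in range and $z_i^2 = 0$ otherwise. I will prove the statement in that form, since a literal polynomial ring would have infinitely many nonzero degrees on a compact manifold.

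For (3) I will use the splitting principle. The map $B(O(1)^n) \to BO(n)$ induced by the diagonal inclusion $O(1)^n \subset O(n)$ identifies $H^*(BO(n);\F)$ with the $\Sigma_n$-invariants of $H^*(B(O(1)^n);\F) = \F[t_1,\dots,t_n]$, and $\omega_i$ is sent to the $i$-th elementary symmetric polynomial.
\begin{itemize}
\item \emph{Injectivity.} The flag bundle $BO(1)^n \to BO(n)$ has fibre the full flag manifold, whose cohomology is generated by restrictions of the $t_j$. Leray--Hirsch then makes $H^*(BO(n))$ a direct summand.
\item \emph{Image.} The image lies in the invariants, and a Poincaré series count (or induction on $n$ via the sphere bundle $BO(n-1) \to BO(n)$ and its Gysin sequence with Euler class $\omega_n$) shows the image is all of them.
\end{itemize}

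For (1) I will induct on $\ell$ using the fibration
\[
V_{n-1,\ell-1} \to V_{n,\ell} \to S^{n-1},
\]
obtained by projecting to the first vector. Starting from $V_{n-\ell+1,1} = S^{n-\ell}$, the fibre has, by induction, a simple system of generators in degrees $n-\ell,\dots,n-2$, and the base contributes a generator in degree $n-1$.

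I expect the main obstacle to be the collapse of the mod-2 Serre spectral sequence of this fibration; the only possible differential is a transgression $d_{n-1}$ into the top class of the base. To rule it out I will realise every generator as a pullback from the Grassmannian. Concretely:
\begin{itemize}
\item I use the fibration $V_{n,\ell} \to BO(n-\ell) \to BO(n)$, or equivalently $V_{n,\ell} = O(n)/O(n-\ell)$.
\item The classes $\omega_{n-\ell+1},\dots,\omega_n$ transgress to the $z_{i-1}$.
\item Borel's transgression theorem, combined with (3), then shows that these $z$'s survive and form a simple system of generators.
\end{itemize}
The relation $z_i^2 = z_{2i}$ comes from $\mathrm{Sq}^i z_i = z_i^2$ together with the Wu formula $\mathrm{Sq}^i \omega_{i+1} = \omega_{2i+1} + \dots$, since Steenrod squares commute with transgression. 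Totals are confirmed by comparing Poincaré polynomials with the cell decomposition of $V_{n,\ell}$ into Schubert-type cells.

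Finally, (2) follows by taking $\ell = n$, which gives $V_{n,n} = O(n)$, or $\ell = n-1$, which gives $V_{n,n-1} = SO(n)$. Since $O(n) = SO(n) \sqcup SO(n)$, the degree-zero class $y_0 = z_0$ records the component, and the remaining generators $y_1,\dots,y_{n-1}$ come from $SO(n)$ with the same squaring relations.
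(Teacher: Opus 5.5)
The paper gives no proof of this proposition; it just cites Borel. Your reinterpretation of the statement is correct and necessary. Taken literally, $\F[z_{n-\ell},\dots,z_{n-1}]$ cannot be the cohomology of a compact manifold: already $\ell=1$ gives $S^{n-1}$, where $z_{n-1}^2=0$. So it must mean a simple system of generators with $z_i^2=z_{2i}$ for $2i\le n-1$ and $z_i^2=0$ otherwise. Likewise, (2) as printed is really the statement for $SO(n)\cong V_{n,n-1}$, since $H^0(O(n);\F)\cong\F\oplus\F$. Your (3) is the standard splitting-principle proof. For (1), your ingredients are essentially Borel's own: transgression in $V_{n,\ell}\to BO(n-\ell)\to BO(n)$, compatibility of $\mathrm{Sq}^i$ with transgression, and the Wu formula. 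The proposal is sound in substance, but several statements need correcting.

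\emph{The $z_i$ are not pullbacks from a Grassmannian.} Along the fibre inclusion $V_{n,\ell}\to BO(n-\ell)$, or the projection $V_{n,\ell}\to\mathrm{Gr}_\ell(\RR^n)$, every positive-degree class pulls back to zero. The reason is that the tautological $\ell$-plane bundle on $V_{n,\ell}$ is trivialised by the frame, and its complement $\eta$ satisfies $\eta\oplus\epsilon^\ell\cong\epsilon^n$. Also, it is $z_{i-1}$ that transgresses to $\omega_i$, not the reverse.

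This vanishing, i.e.\ $E_\infty^{0,q}=0$ for $q>0$, is exactly what makes your squaring argument work. A transgressive class with zero transgression must vanish. In $\mathrm{Sq}^i\omega_{i+1}=\omega_{2i+1}+\sum_{t<i}\omega_{i-t}\omega_{i+1+t}$, every decomposable term is already zero on $E_{2i+1}$, because $n-\ell<i+1+t\le 2i$. Hence $z_i^2=z_{2i}$, or $z_i^2=0$ when $2i\ge n$ since then $\omega_{2i+1}=0$.

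\emph{You need the converse form of Borel's theorem.} The usual statement runs fibre $\Rightarrow$ base: a transgressive simple system in the fibre of an acyclic fibration forces a polynomial base. You need the converse, relative to a non-acyclic total space. From surjectivity of $H^*(BSO(n))\to H^*(BSO(n-\ell))$ with kernel $(\omega_{n-\ell+1},\dots,\omega_n)$, one deduces a transgressive simple system in the fibre. This is true, via Zeeman comparison or Eilenberg--Moore: there $\mathrm{Tor}$ is exterior on classes of degree $i-1$, and the spectral sequence collapses because its algebra generators sit in filtration $-1$. Work over $BSO$ to avoid the $\pi_1BO(n)$ local system. But once this is invoked it proves (1) outright, so your sphere-bundle induction becomes redundant.

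\emph{An elementary route for the induction.} If you want the induction to carry the additive structure on its own, the collapse needs no transgression theory. The map $(v_1,\dots,v_\ell)\mapsto(v_1,v_2)$ sends $V_{n,\ell}\to V_{n,2}$ over $S^{n-1}$. On fibres it is the first-vector projection $V_{n-1,\ell-1}\to S^{n-2}$, which by the inductive construction pulls the fundamental class back to $z_{n-2}$. By naturality, $d_{n-1}(z_{n-2})$ is the transgression in the unit tangent bundle $S^{n-2}\to V_{n,2}\to S^{n-1}$, namely $w_{n-1}(TS^{n-1})=\chi(S^{n-1})\bmod 2=0$. The relations $z_i^2=z_{2i}$ with $2i\le n-2$ then lift from the fibre, since restriction is injective below degree $n-1$. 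Extra input, such as your Steenrod-square argument, is needed only for $2i\ge n-1$.

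This gives a self-contained proof where the paper only cites. The Borel route is heavier, but it also produces the transgressions $z_{i-1}\mapsto\omega_i$, which is the kind of information the paper's later flag-manifold computation relies on.
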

\subsection{Flag Manifolds over \(\mathbb{R}\)}

Let \(n_{1},\dots,n_{s}\) be positive integers with \(n_{1}+\cdots+n_{s}=n\).  
An \emph{\((n_{1},\dots,n_{s})\)-flag over \(\mathbb{R}\)} is an ordered collection \(\sigma=(\sigma_{1},\dots,\sigma_{s})\) of mutually orthogonal subspaces of \(\mathbb{R}^{n}\), where \(\dim\sigma_{i}=n_{i}\).  
The space of all such flags is a compact smooth manifold called the \emph{real flag manifold}, denoted \(G_{\mathbb{R}}(n_{1},\dots,n_{s})\).

\begin{remark}
The case \(G_{\mathbb{R}}(n_{1},n_{2})\) is the Grassmannian of \(n_{1}\)-planes in \(\mathbb{R}^{n_{1}+n_{2}}\), i.e.~\(G_{\mathbb{R}}(n_{1},n_{2}) \cong \mathrm{Gr}_{n_{1}}(\mathbb{R}^{n_{1}+n_{2}})\).
\end{remark}
The real flag manifold can be described as the homogeneous space 
\[
G_{\mathbb{R}}(n_{1}, n_{2}, \dots, n_{s}) \;\cong\; \frac{O(n)}{O(n_{1}) \times O(n_{2}) \times \cdots \times O(n_{s})}.
\] The group \(O(n)\) acts transitively on flags, with stabilizer the block-diagonal subgroup \(O(n_{1}) \times \cdots \times O(n_{s})\), giving \(G_{\mathbb{R}}(n_{1},\dots,n_{s})\) its natural smooth structure and an \(O(n)\)-invariant Riemannian metric.

Note that $V_{n,\ell}\cong \dfrac{O(n)}{O(n-\ell)}$ and the group $(C_2)^\ell=O(1)^\ell$ acts freely on it as described in \S 3.1. The quotient map
\[
V_{n,\ell} \longrightarrow G_{\mathbb{R}}(1,\ldots,1,n-\ell)
\]
is therefore a principal $(C_2)^\ell$--bundle. Consequently, it is classified
by a map to the classifying space $B(C_2)^\ell \simeq (\mathbb{RP}^\infty)^\ell$,
and we obtain a homotopy fibration
\begin{equation}\label{eq:flag-fibration}
V_{n,\ell} \;\longrightarrow\; G_{\mathbb{R}}(1, \ldots, 1, n-\ell) \;\longrightarrow\; (\mathbb{RP}^\infty)^{\ell},
\end{equation}

\subsection{The spectral sequence computation}

We apply the Serre spectral sequence in cohomology (with coefficients in a field \(\F\)) to the fibration \eqref{eq:flag-fibration}.  
The \(E_2\)-page has the form
\[
E_2^{p,q} \;=\; H^p\!\left( (\mathbb{RP}^\infty)^{\ell}; \, \mathcal{H}^q(V_{n,\ell}; \F) \right)  
\quad\Longrightarrow\quad H^{p+q}\!\left( G_{\mathbb{R}}(1, \ldots, 1, n-\ell); \F \right),
\]
where \(\mathcal{H}^q(V_{n,\ell}; \F)\) denotes the local coefficient system on the base induced by the monodromy action of \(\pi_1\big((\mathbb{RP}^\infty)^{\ell}\big)\) on \(H^q(V_{n,\ell}; \F)\).

The fundamental group \(
\pi_1\big((\mathbb{RP}^\infty)^{\ell}\big) \;\cong\; (\mathbb{Z}/2)^{\ell}
\) acts on \(V_{n,\ell}\) by sign changes in each coordinate:
\[
(\lambda_1, \ldots, \lambda_\ell) \cdot (v_1, \ldots, v_\ell) 
\;=\; (v_1, \ldots, v_\ell) \cdot \operatorname{diag}(\lambda_1, \ldots, \lambda_\ell),
\]
where each \(\lambda_i \in \{\pm 1\}\). Over \(\F\)-coefficients, this action is trivial on cohomology.  
Therefore the local system \(\mathcal{H}^q(V_{n,\ell}; \F)\) is constant, and the \(E_2\)-page splits as a tensor product:
\[
E_2^{p,q} \;\cong\; H^p\!\left( (\mathbb{RP}^\infty)^\ell; \F \right) \otimes_{\F} H^q(V_{n,\ell}; \F).
\]

Note that the cohomology of \((\mathbb{RP}^\infty)^{\ell}\) is a polynomial algebra
\[
H^*\!\left( (\mathbb{RP}^\infty)^\ell; \F \right) \;\cong\; \F[x_1, \ldots, x_\ell],
\]
where \(x_i = w_1(L_i) \in H^1(\mathbb{RP}^\infty; \F)\) is the first Stiefel–Whitney class of the tautological line bundle \(L_i\) over the \(i\)-th factor.

The cohomology of \(V_{n,\ell}\) is generated by classes
\(
z_{n-\ell}, z_{n-\ell+1}, \ldots, z_{n-1}
\)
in degrees \(n-\ell, n-\ell+1, \ldots, n-1\), respectively. Thus
\[
E_2^{p,q} \;\cong\; \F[x_1, \ldots, x_\ell] \otimes_{\F} \F[z_{n-\ell}, \ldots, z_{n-1}].
\]

Consider the following homotopy commutative diagram of fibrations
\(({\bf A}), ({\bf B}), ({\bf C})\), and \(({\bf D})\):
\[
\xymatrix{
({\bf A}) & ({\bf B}) & ({\bf C}) & ({\bf D})\\
V_{n,\ell} \ar[d] \ar@{=}[r]
  & V_{n,\ell} \ar[d]
  & O(n) \ar[d] \ar[l]^{\phi} \ar@{=}[r]
  & O(n)\ar[d] \\
G_{\mathbb{R}}(1,\ldots,1,n-\ell) \ar[d] \ar[r]
  & \dfrac{O(n)}{O(\ell)\times O(n-\ell)} \ar[d]
  & \dfrac{O(n)}{O(\ell)\times O(n-\ell)} \ar[d] \ar@{=}[l] \ar[r]
  & EO(n)\ar[d] \\
BO(1)^\ell \ar[r]_-{g}
  & BO(\ell)
  & BO(\ell)\times BO(n-\ell) \ar[l]^-{\pi_1} \ar[r]_-{f}
  & BO(n).
}
\]

We analyze the induced Serre spectral sequences and their
differentials using naturality.

\medskip

Lets begin with the universal fibration \(({\bf D})\),
\[
O(n) \longrightarrow EO(n) \longrightarrow BO(n).
\]
It is classical that the transgression in the Serre spectral sequence satisfies
\[
\tau(y_i)=w_{i+1},
\]
where \(w_{i+1}\) is the \((i+1)\)-st Stiefel--Whitney class of the universal
\(n\)-plane bundle over \(BO(n)\).

\medskip

Next, consider the fibration \(({\bf C})\).
The map
\[
f \colon BO(\ell)\times BO(n-\ell) \longrightarrow BO(n)
\]
classifies the  vector bundles
\(
\xi_\ell \times \xi_{n-\ell},
\)
where \(\xi_k\) denotes the canonical \(k\)-plane bundle over \(BO(k)\).
By naturality of the Serre spectral sequence with respect to the map \(f\),
the differential in \(({\bf C})\) is given by
\[
d_{i+1}(y_i)
=
w_{i+1}(\xi_\ell \times \xi_{n-\ell})
\in H^{i+1}(BO(\ell)\times BO(n-\ell);\mathbb{F}_2).
\]
Let $w_i = w_i(\xi_{\ell})$, $\tilde w_i = w_i(\xi_{n-\ell})$, and write
\[
w = 1 + w_1 + \cdots + w_{\ell}, \qquad 
w' = 1 + w'_1 + \cdots
\]
for the inverse of $w$. By Proposition~11.1 of~\cite{Bor53a}, we have
\[
\sum_{i+j=r} \tilde w_i w_j = 0 \qquad \text{for all } r \le n-\ell.
\]
Comparing this with
\[
(1+w_1+\cdots+w_{\ell})(1+w'_1+\cdots)=1
\]
shows that $\tilde w_i = w'_i$ for all $i \le n-\ell$. Applying this on the
$E_{n-\ell+1}$-page gives
\[
\begin{aligned}
d_{n-\ell+1}(y_{n-\ell})
&= w_{n-\ell+1}(\xi_{\ell}\times\xi_{n-\ell}) \\
&= \sum_{\substack{i+j=n-\ell+1\\ i\le \ell,\; j\le n-\ell}} \tilde w_i w_j
= \sum_{\substack{i+j=n-\ell+1\\ i\le \ell,\; j\le n-\ell}} w_i w'_j
= -\,w'_{n-\ell+1}.
\end{aligned}
\]
The same argument yields $d_{j+1}(y_{j})=-w'_{j+1}$ for all $j\ge n-\ell$.
\medskip

Now consider the fibration \(({\bf B})\).
The map
\[
\phi \colon O(n) \longrightarrow V_{n,\ell}
\]
induces a pullback on cohomology. A direct inspection of the generators shows
that
\[
y_1,\ldots,y_{n-\ell-1} \notin \operatorname{Im}(\phi^*),
\]
while for \(j \ge n-\ell\) we have
\(
z_j=\phi^*(y_j).
\)

 Comparing the Serre spectral sequences for \(({\bf B})\) and \(({\bf C})\) yields
\[
d_{j+1}(z_j)
=
-w'_{j+1}
\qquad j \ge n-\ell.
\]

\medskip

This completely determines the nontrivial differentials in the Serre spectral
sequence associated to the fibration \(({\bf B})\).

We now determine the differentials in the Serre spectral sequence associated to
the fibration \(({\bf A})\),
\[
V_{n,\ell}
\longrightarrow
G_{\mathbb{R}}(1,\ldots,1,n-\ell)
\longrightarrow
BO(1)^\ell .
\]

Note that $g \colon BO(1)^\ell \to BO(\ell)$ is the classifying map for the Whitney
sum $\gamma_1 \oplus \cdots \oplus \gamma_\ell$. Hence $g^\ast(w')= \prod_{i=1}^\ell (1+x_i)^{-1}.$

Consequently, for $j=n-\ell, \cdots, n-1$, the differential (for the spectral sequence \(({\bf A})\)) is given by
\[
d_{j+1}(z_j)
= -g^\ast(w'_{j+1})
=
f_{j+1}(x_1,\ldots,x_\ell) \mbox{ (mod }2),
\]
where $f_{j+1}$ denotes the complete symmetric polynomial of degree $j+1$.

All other differentials vanish for degree reasons. These relations completely determine the \(E_\infty\)-page and hence we have 

\begin{prop}\label{ppartialflag}
The cohomology ring of the real flag manifold satisfies
\[
H^\ast \left( G_{\mathbb{R}}(1, \ldots, 1, n-\ell); \F \right) \cong \F[x_1, \ldots, x_\ell] \, / \, (f_{n-\ell+1}, \ldots, f_{n}),
\]
where $f_j$ denotes the complete symmetric polynomial of degree $j$ in $x_1, \ldots, x_\ell$.
\end{prop}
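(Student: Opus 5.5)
The plan is to read the ring structure off the Serre spectral sequence of the fibration (A) in \eqref{eq:flag-fibration}, using the differentials $d_{j+1}(z_j)=f_{j+1}(x_1,\dots,x_\ell)$ just computed. Those differentials were stated on a polynomial algebra in the $z_j$. With $\F$-coefficients, however, $H^\ast(V_{n,\ell};\F)$ has a simple system of generators $z_{n-\ell},\dots,z_{n-1}$, so the correct model is the exterior-type (Borel) structure. First, consider the Koszul-type complex
\[
K=\F[x_1,\dots,x_\ell]\otimes \Lambda(z_{n-\ell},\dots,z_{n-1}),\qquad d z_j = f_{j+1}.
\]
Following Borel's method for homogeneous spaces, the $E_\infty$-page agrees with $H(K)$ once the transgressions are known. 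One can arrange this by passing to the associated graded with respect to the simple system, or by comparing with (B) and (C), where the same approach yields the classical Borel description.

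The key algebraic step is to show that $f_{n-\ell+1},\dots,f_n$ form a regular sequence in $\F[x_1,\dots,x_\ell]$. Since there are $\ell$ homogeneous elements in a polynomial ring of $\ell$ variables, regularity is equivalent to the quotient being finite-dimensional, that is, to the only common zero over $\overline{\F}$ being the origin. For this I would use the generating function $\sum_k f_k t^k=\prod_i (1-x_it)^{-1}$ together with the recursion $f_k(x_1,\dots,x_\ell)=f_k(x_1,\dots,x_{\ell-1})+x_\ell f_{k-1}(x_1,\dots,x_\ell)$. This gives the standard fact that $\ell$ consecutive complete symmetric polynomials generate an ideal containing $x_i^{N}$ for large $N$. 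Concretely, $\sum_{j=0}^{\ell} (-1)^j e_j f_{k-j}=0$ expresses each $f_k$ with $k>n$ in the ideal, and $x_\ell^{n}$ lies in it by the identity $\prod_i(t-x_i)\cdot(\cdots)$. Equivalently, I would do an induction on $\ell$ by setting $x_\ell=0$.

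Given regularity, the Koszul complex $K$ is acyclic except in degree zero. Hence $E_\infty=E_\infty^{\ast,0}=\F[x_1,\dots,x_\ell]/(f_{n-\ell+1},\dots,f_n)$, and there are no extension problems because everything sits on the base row. The edge map $H^\ast(BO(1)^\ell)\to H^\ast(G_\RR(1,\dots,1,n-\ell))$ is then a surjective ring map with this kernel, which proves Proposition~\ref{ppartialflag}. As a consistency check, the Poincaré series of the quotient is $\prod_{j=n-\ell+1}^{n}(1-t^j)/(1-t)^\ell$. At $t=1$ this gives $n!/(n-\ell)!$, which is the Euler characteristic count of $\F$-Betti numbers of the flag manifold (number of Schubert cells).

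I expect the main obstacle to be justifying that the spectral sequence behaves like the Koszul complex: the transgressions must be the only differentials, and the $z_j$ must be handled as a simple system rather than as free polynomial generators. I would resolve this with the dimension count. The quotient has total dimension $n!/(n-\ell)!$, which equals $\dim_{\F} H^\ast(G_\RR(1,\dots,1,n-\ell);\F)$ from the Schubert cell decomposition with $\F$-coefficients, where all cellular differentials vanish. Since $E_\infty^{\ast,0}$ is a quotient of this ring and has total dimension at most the total, the equality forces $E_\infty^{\ast,0}$ to be the whole cohomology and the ideal to be exactly $(f_{n-\ell+1},\dots,f_n)$.
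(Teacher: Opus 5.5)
\textbf{The gap is in the dimension count.} Your route is the paper's: the Serre spectral sequence of (A), the transgressions $d_{j+1}(z_j)=f_{j+1}$ obtained from (B)--(D), and reading the answer off the bottom row. Several of your points are good. You correctly note that $H^\ast(V_{n,\ell};\F)$ has only a simple system of generators, not a polynomial algebra. You also correctly make explicit that regularity of $f_{n-\ell+1},\dots,f_n$ is needed. The paper's ``these relations completely determine the $E_\infty$-page'' uses this silently. Your regularity argument and Poincaré-series check are fine.

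The problem is the step you identify as the crux: the dimension count does not show that $E_\infty$ is concentrated in row $0$. Write $J=(f_{n-\ell+1},\dots,f_n)$ and $E_\infty^{\ast,0}=\F[x_1,\dots,x_\ell]/I$ with $I\supseteq J$, where $E_\infty^{\ast,0}$ sits inside $H^\ast(G_{\mathbb{R}}(1,\dots,1,n-\ell);\F)$. The count then gives only
$\dim E_\infty^{\ast,0}\le \dim \F[x]/J = n!/(n-\ell)! = \dim H^\ast(G_{\mathbb{R}}(1,\dots,1,n-\ell);\F)$.
Both inequalities are upper bounds. Nothing rules out a strictly larger kernel $I$ together with classes surviving in rows $q>0$. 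To make the count conclusive you need a lower bound, namely surjectivity of the edge map $H^\ast(BO(1)^\ell;\F)\to H^\ast(G_{\mathbb{R}}(1,\dots,1,n-\ell);\F)$. You do not prove this. If you proved it independently, say via the iterated projective-bundle structure and Leray--Hirsch, that would essentially prove the proposition on its own.

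\textbf{How to close it.} Your Koszul claim is true and can be proved directly, with no appeal to associated gradeds. Write $z_I=\prod_{j\in I}z_j$ for $I\subseteq\{n-\ell,\dots,n-1\}$; then $E_2$ has basis $x^a z_I$.
\begin{itemize}
\item All $x_i$ and $z_j$ are cycles on pages $\le n-\ell$, so by Leibniz $E_{n-\ell+1}=E_2$.
\item Inductively, suppose $E_{j+1}\cong \F[x]/(f_{n-\ell+1},\dots,f_j)\otimes \mathrm{span}\{z_I : I\subseteq\{j,\dots,n-1\}\}$.
\item Transgressivity gives $d_{j+1}(z_{j'})=0$ for $j'>j$ and $d_{j+1}(z_j)=f_{j+1}$. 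So for $I'\subseteq\{j+1,\dots,n-1\}$, Leibniz gives $d_{j+1}(x^a z_j z_{I'})=x^a f_{j+1} z_{I'}$ and $d_{j+1}(x^a z_{I'})=0$.
\item Regularity makes $f_{j+1}$ a nonzerodivisor modulo $(f_{n-\ell+1},\dots,f_j)$. Hence $E_{j+2}$ has the same form with $j$ replaced by $j+1$.
\item The transgressions occur on the consecutive pages $n-\ell+1,\dots,n$. So the induction ends with $E_{n+1}=E_\infty=\F[x]/J$, concentrated in row $0$.
\end{itemize}
This uses only the additive basis $z_I$ and the factorization $z_I=z_{\min I}\,z_{I\setminus\{\min I\}}$, so the relations $z_j^2=z_{2j}$ never enter. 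With this argument in place of the dimension count, your proof is complete and supplies the justification the paper's own proof leaves out.
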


\begin{remark}\label{rem}
The top nontrivial cohomology class of $G_{\mathbb{R}}(1, \cdots, 1, n -\ell)$ is
\[\prod_{i=1}^\ell x_i^{n-i},
\]
which generates the one-dimensional top cohomology group.
\end{remark}

\section{The Proof}
We begin by recalling some key structural facts about representation-graded Bredon cohomology rings due to Hausmann-Schwede \cite{HS24}.  These results will provide the algebraic foundation for all subsequent arguments.

Let $\widetilde{H}^{\Rep}_{G}(S^0)$ denote the
$RO(G)$-graded reduced Bredon cohomology of $S^0$ consisting of those
degrees of the form $V-k$, where $V$ is a finite-dimensional real
$G$-representation and $k\in\mathbb Z$. Note that in \cite{HS24} it is denoted by $H(G, \bs).$ In our notation $H_m(G, V)$ corresponds to $\widetilde{H}^{V-m}_G(S^0;\underline{\mathbb{F}}_2)$.

\begin{prop}[{\cite[Proposition 2.1]{HS24}}]\label{agen}
Let $G$ be an elementary abelian $2$-group and let $V$ be a $G$-representation
with trivial fixed points.

\begin{enumerate}
\item
For every subgroup $H \leq G$, the restriction homomorphism
\[
\res^G_H \colon
\widetilde{H}^{V-m}_G(S^0;\underline{\mathbb{F}}_2)
\longrightarrow
\widetilde{H}^{V-m}_G(G/H_+;\underline{\mathbb{F}}_2)
\]
is surjective.

\item
Suppose 
$\chi$ is a nontrivial $G$-character with kernel $K$. Then the following sequence is exact:
\[
0 \to
\widetilde{H}^{\,W-m}_G(S^0;\underline{\mathbb{F}}_2)
\stackrel{\cdot a_\chi}{\to}
\widetilde{H}^{\,W\oplus \chi -m}_G(S^0;\underline{\mathbb{F}}_2)
\stackrel{\res^G_K}{\to}
\widetilde{H}^{W\oplus \chi -m}_G(G/K_+;\underline{\mathbb{F}}_2)
\to 0 .
\]

\item
The $\mathbb{F}_2$-vector space
\(\widetilde{H}^{\,V-m}_G(S^0;\underline{\mathbb{F}}_2)
\)
is spanned by the classes
\[
a_U \cdot u_W
\]
for all $G$-representations $U$ and $W$ such that $U \oplus W = V$ and
$m = \dim(W)$.
\end{enumerate}
\end{prop}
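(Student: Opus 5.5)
The plan is to prove all three parts together by a double induction: first on the order of $G$, then on $\dim V$. The basic tool is the cofibre sequence of based $G$-spaces
\[
S(\chi)_+ \longrightarrow S^0 \xrightarrow{\;a_\chi\;} S^\chi ,
\]
where $\chi$ is a nontrivial character with kernel $K$. Here $S(\chi)$ is the two-point space $G/K$, and the second map is by definition $a_\chi$. We apply $\widetilde{H}^{W\oplus\chi-m}_G(-;\uFz)$ and use the suspension isomorphism $\widetilde{H}^{W\oplus\chi-m}_G(S^\chi)\cong \widetilde{H}^{W-m}_G(S^0)$. This gives a long exact sequence
\[
\cdots\to \widetilde{H}^{W-m}_G(S^0)\xrightarrow{\cdot a_\chi}\widetilde{H}^{W\oplus\chi-m}_G(S^0)\xrightarrow{\res^G_K}\widetilde{H}^{W\oplus\chi-m}_G(G/K_+)\xrightarrow{\delta}\widetilde{H}^{W-m+1}_G(S^0)\xrightarrow{\cdot a_\chi}\cdots.
\]
Exactness of the sequence in (2) at the middle term is therefore automatic. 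Injectivity of $\cdot a_\chi$ and surjectivity of $\res^G_K$ are both equivalent to $\delta=0$. Since $\delta=0$ follows from surjectivity of $\res^G_K$, part (2) follows from part (1) for the index-two subgroup $K$.

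For part (1), I first use the Wirthmüller isomorphism $\widetilde{H}^{V-m}_G(G/H_+;\uFz)\cong \widetilde{H}^{V|_H-m}_H(S^0;\uFz)$. Under it, $\res^G_H$ becomes ordinary restriction of $RO(G)$-graded classes. Restriction is multiplicative and sends $a_U\mapsto a_{U|_H}$ and $u_W\mapsto u_{W|_H}$. These classes exist with $\uFz$-coefficients for every representation, since mod $2$ every representation is orientable. If some character of $U$ becomes trivial on $H$, then $a_{U|_H}=0$ by the convention on trivial summands; this is harmless.

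Since $|H|<|G|$, the inductive hypothesis (part (3) for $H$) says the target is spanned by classes $a_{U'}u_{W'}$ with $U'\oplus W'=V|_H$. Now write $V$ as a sum of characters. Restriction of characters is a surjection $\Hom(G,C_2)\to \Hom(H,C_2)$, and the decomposition of $V|_H$ into characters is unique up to isomorphism. Hence every splitting $V|_H=U'\oplus W'$ is the restriction of a splitting $V=U\oplus W$, obtained by partitioning the characters of $V$ accordingly. So $a_{U'}u_{W'}=\res^G_H(a_Uu_W)$, which proves surjectivity. The case $V|_H$ with trivial summands is covered because $u$ of a trivial summand is $1$.

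For part (3), I induct on $\dim V$. In the base case $V=0$, we have $\widetilde{H}^{-m}_G(S^0;\uFz)=\pi_m(H\uFz)(G/G)$, which is $\F$ for $m=0$ and $0$ otherwise. This group is spanned by $1=a_0u_0$.

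For the inductive step, write $V=W\oplus\chi$ and take $x\in \widetilde{H}^{V-m}_G(S^0)$. By the argument for (1), $\res^G_K(x)$ is a sum of restrictions $\res^G_K(a_Uu_{W''})$ with $U\oplus W''=V$ and $\dim W''=m$. Hence $x-\sum a_Uu_{W''}$ lies in the kernel of $\res^G_K$, which by (2) is $a_\chi\cdot \widetilde{H}^{W-m}_G(S^0)$. By induction on dimension, that group is spanned by $a_{U_1}u_{W_1}$ with $U_1\oplus W_1=W$ and $\dim W_1=m$. Multiplying by $a_\chi$ gives $a_{U_1\oplus\chi}u_{W_1}$, which is again of the required form.

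The logical order is: (3) for proper subgroups, then (1) for $G$, then (2) for $G$, then (3) for $G$ by induction on $\dim V$. The induction starts with the trivial group, where everything is ordinary cohomology of a point.

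I expect the main obstacle to be the bookkeeping behind part (1). We must check carefully that the Wirthmüller identification is compatible with the module structure, and that restriction really carries $u_W$ to $u_{W|_H}$ for the mod-$2$ orientation classes. We must also handle the degenerate case where restricted characters become trivial, so the grading $V|_H$ is no longer fixed-point-free. In that case $V|_H$ must be rewritten as $V'\oplus k$ with trivial $k$, shifting $m$, and the spanning statement in (3) for $H$ must be stated for general $V$, not only fixed-point-free ones.
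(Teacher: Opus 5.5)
The paper gives no proof of this proposition. It is quoted from Hausmann--Schwede \cite{HS24} and used as a black box, so there is no in-paper argument to compare yours against.

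Your proof is correct and self-contained, and its logical order has no circularity:
\begin{enumerate}
\item part (3) for proper subgroups gives part (1) for $G$;
\item part (1) for $G$ gives part (2) for $G$;
\item part (3) for $G$ follows by induction on $\dim V$, using only (1) and the middle exactness of the sequence coming from $G/K_+\to S^0\to S^\chi$, which is automatic.
\end{enumerate}

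The points you flag resolve as you suggest.
\begin{itemize}
\item For $H=G$, part (1) is trivial, so the induction only needs $|H|<|G|$ for proper $H$.
\item When $V|_H$ acquires trivial summands, write $V|_H=V'\oplus\mathbb{R}^k$. Lift a splitting $U''\oplus W''=V'$ by putting every character of $V$ that is trivial on $H$ into $W$. Then the restriction picks up only $u$ of a trivial representation, which is $1$, and $\dim W=\dim W''+k=m$ matches the shift $V|_H-m=V'-(m-k)$.
\item $\res^G_H(u_W)=u_{W|_H}$ because both classes restrict to $1$ at the trivial group, and that property is how the mod $2$ orientation class is characterized.
\item The connecting map shifts degree. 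Injectivity of $\cdot a_\chi$ on $\widetilde H^{W-m}_G(S^0;\uFz)$ is equivalent to surjectivity of $\res^G_K$ in degree $W\oplus\chi-(m+1)$. This is harmless, since you prove (1) for all $m$.
\end{itemize}

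Besides making the paper self-contained, your route shows that the injectivity in (2) is a formal consequence of the surjectivity in (1). Iterating it over the characters $\chi_1,\dots,\chi_k$ of $V$ gives $a_V=a_{\chi_k}\cdots a_{\chi_1}\cdot 1\neq 0$ in $\widetilde H^{V}_G(S^0;\uFz)$. That nonvanishing is exactly what the proof of Proposition~\ref{psphere} needs. So that argument could bypass the integral-domain result, Theorem~\ref{intdom}, entirely.
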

\begin{theorem}[{\cite[Theorem 2.2]{HS24}}]\label{intdom}
For every elementary abelian $2$-group \(G\), the representation-graded Bredon homology ring
\(
\tilde{H}^\Rep_G(S^0; \uFz)
\)
is an integral domain.
\end{theorem}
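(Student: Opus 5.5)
The plan is to embed $\tH^{\Rep}_G(S^0;\uFz)$ into a ring already known to be a domain. The natural candidate is the Borel-type ring of Corollary~\ref{uni}. The collapse map $EG_+\to S^0$ induces a ring homomorphism
\[
\beta_G\colon \tH^{\Rep}_G(S^0;\uFz)\longrightarrow \tH^{\bs}_G(EG_+;\uFz)\cong \F[x_i,u_\chi^{\pm}\mid \chi\in G^\circ,\ 1\le i\le \ell].
\]
The target is a Laurent extension of a polynomial ring over a field, hence an integral domain. Under $\beta_G$, the Euler class $a_\chi$ maps to $x_\chi u_\chi$, where $x_\chi$ is the nonzero linear form $\sum_i \alpha_i x_i$ for $\chi=\chi_\alpha$; this extends the Remark after Corollary~\ref{uni} from $\chi_i$ to all nontrivial characters. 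In particular $\beta_G(a_\chi)\neq 0$. It therefore suffices to show that $\beta_G$ is injective in every degree $V-m$, since a subring of a domain is a domain.

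I would prove injectivity by a double induction: first on the order of $G$, then on $\dim V^{\perp}$, where $V^\perp$ is the complement of the trivial summand $V^G$. Trivial summands can be absorbed into the integer shift, using $u_{V\oplus 1}=u_V$ and the fact that $S^1$ is invertible. So we may assume $V^G=0$ and regard the degree as $V-m$. In the base case $V=0$, the group $\tH^{-m}_G(S^0;\uFz)$ is $\F$ for $m=0$ and zero otherwise, and $\beta_G$ visibly sends $1\mapsto 1$. The base case $G=e$ is trivial for the same reason.

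For the inductive step, let $z$ lie in degree $V-m$ with $\beta_G(z)=0$, and choose a nontrivial character $\chi$ with $V=W\oplus\chi$, writing $K=\ker\chi$. Restriction to $K$ commutes with the Borel maps, so $\beta_K(\res^G_K z)=\res(\beta_G z)=0$. Here $\res^G_K z$ lives in $K$-degree $V|_K-m$, and $V|_K$ contains the trivial summand $\chi|_K$. Since $|K|<|G|$, the inductive hypothesis applied to $K$ (after absorbing trivial summands) gives $\res^G_K z=0$. By the exactness in Proposition~\ref{agen}(2), $z=a_\chi z'$ with $z'$ in degree $W-m$. Then $0=\beta_G(z)=x_\chi u_\chi\,\beta_G(z')$, and since the target is a domain and $x_\chi u_\chi\neq 0$, we get $\beta_G(z')=0$. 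Induction on $\dim V$ now gives $z'=0$, hence $z=0$.

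The main obstacle I expect is the bookkeeping in the restriction step. First, one must identify $\tH^{V-m}_G(G/K_+;\uFz)$ with $\tH^{V|_K-m}_K(S^0;\uFz)$ and check that this identification is compatible with the Borel maps $\beta_G$ and $\beta_K$. Second, one must track how the trivial summand $\chi|_K$ shifts the integer grading. The same care is needed to see that $\beta_G(a_\chi)=x_\chi u_\chi$ for every nontrivial $\chi$, and not only for the coordinate characters $\chi_i$. If that identification proves awkward, a fallback is to use Proposition~\ref{agen}(3): check directly that the images of the spanning monomials $a_U u_W$ are linearly independent modulo the relations coming from the exact sequences. The inductive argument above avoids this case analysis, so I would try it first.
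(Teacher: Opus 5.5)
Your proof is correct, but it follows a different route from the paper, which has no proof of its own: it imports the statement, together with Proposition~\ref{agen}, from Hausmann--Schwede. You instead prove something stronger, namely that the Borel completion $\beta_G$ is injective in every degree $V-m$, and you get the domain property because $H^*(BG;\F)[u_\chi^{\pm}]$ is a domain.

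The induction works as you set it up. It needs only three inputs:
\begin{enumerate}
\item middle exactness for the cofibre sequence $G/K_+\to S^0\xrightarrow{a_\chi}S^\chi$, which is just $S(\chi)\cong G/K$ and uses nothing from Hausmann--Schwede;
\item the fact that $\res^G_K$ commutes with $\beta$, which is routine because the identification $\tH^{\alpha}_G(G/K_+;\uFz)\cong\tH^{\alpha|_K}_K(S^0;\uFz)$ is natural in the space and $EG$ is a model for $EK$;
\item $\beta_G(a_\chi)=w_1(EG\times_G\chi)\,u_\chi\neq 0$, which is all you actually need about Euler classes.
\end{enumerate}

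Your route gives three things the citation does not.
\begin{enumerate}
\item It is self-contained.
\item It shows that the two outer exactness claims of Proposition~\ref{agen}(2), injectivity of $\cdot a_\chi$ and surjectivity of $\res^G_K$, are consequences rather than inputs. Via the long exact sequence, both reduce to $a_\chi$ being a non-zero-divisor, which is immediate once the ring sits inside a domain with $\beta_G(a_\chi)\neq 0$.
\item It makes explicit the embedding $a_\chi\mapsto x_\chi u_\chi$ that the paper later uses in Proposition~\ref{action-2}. This makes the non-zero-divisor property of $a_V$ needed in Proposition~\ref{psphere} transparent.
\end{enumerate}

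What your route does not give by itself is a description of the image, such as the dimensions of the graded pieces. For that you still need the spanning statement of Proposition~\ref{agen}(3). That statement identifies the image with the subalgebra generated by the classes $x_\chi u_\chi$ and $u_\chi$. Relations among the classes $a_U u_W$ can then be checked by computing in the target.
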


\begin{prop}\label{psphere}
Let \(G\) be an elementary abelian 2-group and \(V\) a \(G\)-representation with trivial fixed points. Then
\[
\tilde{H}^V_G(S(V)_+; \uFz) \cong 0.
\]
\end{prop}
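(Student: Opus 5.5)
We prove the vanishing with the cofibre sequence of the unit sphere,
\[
S(V)_+ \longrightarrow S^0 \xrightarrow{\;a_V\;} S^V ,
\]
which gives a long exact sequence in \(\tH^\bs_G(-;\uFz)\). In degree \(V\) its relevant portion reads
\[
\tH^{V}_G(S^V;\uFz)\xrightarrow{\,a_V^*\,}\tH^{V}_G(S^0;\uFz)\longrightarrow \tH^{V}_G(S(V)_+;\uFz)\longrightarrow \tH^{V+1}_G(S^V;\uFz)\xrightarrow{\,a_V^*\,}\tH^{V+1}_G(S^0;\uFz).
\]
By the suspension isomorphism, \(\tH^{V+k}_G(S^V;\uFz)\cong \tH^{k}_G(S^0;\uFz)\), and under this identification \(a_V^*\) becomes multiplication by the class \(a_V\in \tH^{V}_G(S^0;\uFz)\). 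So it suffices to show two things: multiplication by \(a_V\) from \(\tH^0_G(S^0;\uFz)\) to \(\tH^V_G(S^0;\uFz)\) is surjective, and \(\tH^{1}_G(S^0;\uFz)=0\). The second gives vanishing at the right end regardless of injectivity.

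The group \(\tH^1_G(S^0;\uFz)=\tH^1_G(S^0;\uFz)\) is the ordinary-graded cohomology of a point with constant Mackey functor coefficients. It vanishes for dimensional reasons, since \(H\uFz\) has homotopy Mackey functor concentrated in degree \(0\). The same fact gives \(\tH^0_G(S^0;\uFz)=\uFz(G/G)=\F\).

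For surjectivity, we apply Proposition~\ref{agen}(3) with \(m=0\). The group \(\tH^{V}_G(S^0;\uFz)\) is spanned by the classes \(a_U u_W\) with \(U\oplus W=V\) and \(\dim W=0\). This forces \(W=0\) and \(U=V\), so the group is spanned by the single class \(a_V\), using \(u_0=1\). Hence it equals \(\F\cdot a_V\), which is precisely the image of \(1\in\tH^0_G(S^0;\uFz)\) under multiplication by \(a_V\). Exactness then gives \(\tH^V_G(S(V)_+;\uFz)=0\). Theorem~\ref{intdom} shows \(a_V\neq 0\), though this is not needed here.

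The main point requiring care is identifying the connecting maps correctly. First, the restriction along \(S^0\to S^V\), after desuspension, must really be multiplication by \(a_V\) up to a unit, which is the definition of \(a_V\). Second, the degree shift in the boundary map has to be tracked so that the right-hand term is \(\tH^{V+1}_G(S^V)\cong\tH^1_G(S^0)\) and not some other \(RO(G)\)-degree. I would double-check the orientation of the cofibre sequence, namely \(S(V)_+\to D(V)_+\simeq S^0\to S^V\), so that the term following \(\tH^V_G(S(V)_+)\) is \(\tH^{V+1}_G(S^V)\).
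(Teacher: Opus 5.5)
Your proof is correct and takes essentially the same route as the paper: the cofibre sequence $S(V)_+\to S^0\to S^V$, its long exact sequence, and Proposition~\ref{agen}(3) with $m=0$ to see that $\tH^V_G(S^0;\uFz)$ is spanned by $a_V$. There are two small differences. The paper shows that $i^*$ vanishes by combining injectivity of multiplication by $a_V$ (from Theorem~\ref{intdom}) with a dimension count, whereas you read off surjectivity directly from the spanning statement, so you do not need the domain property. You also check explicitly that $\tH^{V+1}_G(S^V;\uFz)\cong\tH^1_G(S^0;\uFz)=0$; the paper uses this tacitly when it passes from $i^*=0$ to the vanishing of $\tH^V_G(S(V)_+;\uFz)$.
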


\begin{proof}
Consider the cofibration sequence of based \(G\)-spaces
\[
S(V)_+ \xrightarrow{i} S^0 \xrightarrow{j} S^V,
\]
where \(S(V)_+\) denotes the unit sphere \(S(V)\) with a disjoint basepoint.

Applying the reduced representation-graded Bredon cohomology functor \(\tilde{H}^V_G(-; \uFz)\), we get the long exact sequence
\[
\cdots \to \tilde{H}^V_G(S^V; \uFz) \xrightarrow{j^\bs} \tilde{H}^V_G(S^0; \uFz) \xrightarrow{i^\bs} \tilde{H}^V_G(S(V)_+; \uFz) \to \tilde{H}^{V+1}_G(S^V; \uFz) \to \cdots.
\]
Since  \(a_V\) is not a zero divisor (as \(\tilde{H}^\Rep_G(S^0; \uFz)\) is a domain by Theorem \ref{intdom}) and the map \(j^\bs\) corresponds to multiplication by \(a_V\), thus, \(j^\bs\) is injective, and thus \(i^\bs\) is the zero since $\tilde{H}^V_G(S^0; \uFz)\cong \F$ by Proposition \ref{agen} (see part (3) for $m=0$). Consequently,
\[
\tilde{H}^V_G(S(V)_+; \uFz) \cong 0,
\]
as claimed.
\end{proof}

We now turn to the description of the $\tilde{H}^\bs_G(S^0; \uFz)$–module structure in the case $S^{n}\times V_{n, \ell},$
with $G=(C_2)^{\ell+1}$. 
\begin{prop}\label{action-2}
Let \(G=(C_{2})^{\ell+1}\) and \(X = S^{n} \times V_{n, \ell}\).  
Then \(\tilde{H}^\bigstar_G(X_+;\uFz)\) is an $\tilde{H}^\bs_G(S^0; \uFz)$-module with action:
\begin{enumerate}
    \item For each nontrivial \(\chi \in G^\circ\), the class \(u_\chi \in \tilde{H}^\bs_G(S^0; \uFz)\) acts by multiplication with the corresponding unit \(u_\chi\).
    \item The Euler class \(a_\chi\) acts by multiplication with \(x_\chi u_\chi\), where \(x_\chi \in H^1(BG;\F)\) is the degree-one class associated to \(\chi\).
\end{enumerate}
\end{prop}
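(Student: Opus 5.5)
The plan is to reduce everything to naturality along the $G$-map $c\colon X\to EG$ that classifies the free action. Since $X=S^n\times V_{n,\ell}$ is a free $G$-CW complex, such a map exists and is unique up to $G$-homotopy. The collapse $X_+\to S^0$ factors as $X_+\to EG_+\to S^0$. Hence the $\tH^\bs_G(S^0;\uFz)$-module structure on $\tH^\bs_G(X_+;\uFz)$ is obtained in two steps. First restrict along the ring map $\tH^\bs_G(S^0;\uFz)\to \tH^\bs_G(EG_+;\uFz)$. Then use the ring map $c^\ast\colon \tH^\bs_G(EG_+;\uFz)\to\tH^\bs_G(X_+;\uFz)$. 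Every map here is multiplicative, so it is enough to determine the images of $u_\chi$ and $a_\chi$ in $\tH^\bs_G(EG_+;\uFz)$ and then push them forward by $c^\ast$.

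First, in $EG$. By Corollary~\ref{uni} and the multiplicative spectral sequence of Proposition~\ref{HFPSS}, the classes $u_\chi^{\pm}$ live on the $s=0$ line, in the factor $\pi_0(S^{-\dim\alpha}\wedge H\F)$. I take the image of the orientation class $u_\chi\in\tH^{\chi-1}_G(S^0;\uFz)$ to be exactly this generator. I would justify this by restricting to the trivial subgroup: on both sides the class restricts to the generator $1$ of the underlying nonequivariant group. The $s=0$ line is detected by this restriction, because the edge map to $H^0(EG/G)=\F$ is an isomorphism. This gives (1) for $EG$.

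For $a_\chi$, I would use the Remark following Corollary~\ref{uni}, which identifies $a_{\chi_i}$ with $x_iu_{\chi_i}$. To handle a general $\chi=\chi_\alpha$, I would pull back along the homomorphism $f_\alpha\colon G\to C_2$. Both $a$ and $u$ are natural under inflation along $f_\alpha$, and the induced map $BG\to BC_2$ pulls the generator $x\in H^1(BC_2)$ back to $x_\chi=\sum\alpha_ix_i$. So the $C_2$ case $a_{\mathrm{sgn}}=x\,u_{\mathrm{sgn}}$ gives $a_\chi=x_\chi u_\chi$. Alternatively, $a_\chi$ restricts to zero on the kernel $K$ and is nonzero by Proposition~\ref{agen}(2) together with Theorem~\ref{intdom}. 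The element of degree $\chi$ in filtration $s=1$ corresponding to $\chi$ is the only class that is compatible with these restrictions, and that class is $x_\chi u_\chi$.

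Second, push forward to $X$. By Corollary~\ref{cbrecoh}, obtained from the same collapsing spectral sequence, the map $c^\ast$ is the map of $E_2$-pages. It is the identity on the factor $\bigotimes\F[u_\chi^\pm]$, and on the factor $H^\ast$ it is the map $H^\ast(BG)\to H^\ast(X/G)=H^\ast(\RR P^n\times G_\RR(1,\dots,1,n-\ell))$ induced by the classifying map. Multiplicativity then gives (1) and (2), with $x_\chi$ read through this pullback.

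I expect the main obstacle to be the identification of $a_\chi$ with $x_\chi u_\chi$. More precisely, the difficulty is checking that the spectral sequence product matches the Bredon ring product, so that there are no hidden extensions, and that the Euler class lands in filtration exactly one. If the argument through $f_\alpha$ is not clean, I would fall back on the Remark together with the cited computation in \cite{BG21} for each character.
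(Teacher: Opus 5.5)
Your proposal is correct and follows the paper's proof: factor the collapse map through the classifying map $X\to EG$, read off the images of $u_\chi$ and $a_\chi=x_\chi u_\chi$ in $\tH^\bs_G(EG_+;\uFz)$ from Corollary~\ref{uni}, and pull back along the induced map on orbit spaces. Your inflation argument along $f_\alpha$ supplies the justification for general $\chi$ that the paper only asserts. Only your fallback argument is shaky: Proposition~\ref{agen}(2) and Theorem~\ref{intdom} give $a_\chi\neq 0$ in $\tH^\bs_G(S^0;\uFz)$, not in $\tH^\bs_G(EG_+;\uFz)$, but your primary route does not need it.
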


\begin{proof}
Since \(G\) acts freely on \(X\), there exists (up to \(G\)-homotopy) a classifying map \(\phi\colon X \to EG\). The collapse map factors as  
\(X_{+}\xrightarrow{\;\phi_{+}\;} EG_{+}\xrightarrow{\;q\;} S^{0}\),  
so the $\tilde{H}^\bs_G(S^0; \uFz)$-action on \(\tilde H_G^\bs(X_+;\uFz)\) is determined by the pullback \(\phi_+^\bs\).

By Corollary~\ref{uni}, \(\tilde H_G^\bs(EG_+;\uFz)\) is generated by invertible Thom classes \(u_\chi\) and Euler classes \(a_\chi = x_\chi u_\chi\). Pulling back gives \(\phi_+^\bs(u_\chi) = u_\chi\). For Euler classes we obtain \(\phi_+^\bs(a_\chi) = \phi_+^\bs(x_\chi)\,u_\chi\), where \(\phi_+^\bs(x_\chi)\) is the natural degree-one class on \(X/G\): for \(\chi=\chi_0\) it is the generator \(x_0\in H^1(\RR P^n;\F)\), and for \(\chi=\chi_i\) with \(1\le i\le \ell\) it is \(x_i\) from the partial flag factor as in Proposition \ref{ppartialflag}. This yields the stated formulas.
\end{proof}

\begin{cor}
Let $V$ be as in \eqref{decom}. Then the action of $a_V$ on
\[
\widetilde{H}_G^{\bs}\!\left(S^{n} \times V_{n, \ell}; \underline{\mathbb{F}}_{2} \right)
\]
is given by multiplication by the element
\[
x_{0}^{n}\,u_{0}^{n}
\cdot
\prod_{i=1}^{\ell}
\,x_{i}^{\,n-i}
\cdot
u_{\chi_{0}\otimes \chi_{i}}\,
u_{\chi_{i}}^{\,n-i-1}.
\]
\end{cor}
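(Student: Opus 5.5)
The plan is to reduce the action of $a_V$ to a product of one-dimensional Euler classes and then evaluate that product in $H^\ast(\RR P^n\times G_{\RR}(1,\dots,1,n-\ell);\F)$, using Proposition~\ref{action-2} for each factor.

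First, since $V$ has trivial fixed points, the relation $a_Va_W=a_{V\oplus W}$ applied to the decomposition \eqref{decom} gives
\[
a_V \;=\; a_{\chi_0}^{\,n}\cdot\prod_{i=1}^{\ell} a_{\chi_0\otimes\chi_i}\, a_{\chi_i}^{\,n-i-1}
\]
in $\tH^\bs_G(S^0;\uFz)$. The module action is multiplicative, because it is induced by a ring map (pullback along $X_+\to EG_+\to S^0$). So the action of $a_V$ is multiplication by the product of the images of the factors.

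Second, by Proposition~\ref{action-2}(2), each $a_\chi$ acts as $x_\chi u_\chi$. Here $x_\chi\in H^1(BG;\F)\cong\Hom(G,C_2)$ is pulled back to $X/G$, and the assignment $\chi\mapsto x_\chi$ is additive, since it is the identification of $H^1(BG;\F)$ with the character group. Hence $x_{\chi_0}=x_0$, $x_{\chi_i}=x_i$, and $x_{\chi_0\otimes\chi_i}=x_0+x_i$. Collecting the units, which commute because everything is mod $2$ and graded commutative, the action of $a_V$ is multiplication by
\[
x_0^{n}\prod_{i=1}^{\ell}(x_0+x_i)\,x_i^{\,n-i-1}\;\cdot\; u_{\chi_0}^{n}\prod_{i=1}^{\ell}u_{\chi_0\otimes\chi_i}\,u_{\chi_i}^{\,n-i-1}.
\]

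Third, simplify the cohomological factor. By Corollary~\ref{cbrecoh}, $x_0$ is the generator of $H^\ast(\RR P^n;\F)=\F[x_0]/(x_0^{n+1})$, so $x_0^{n+1}=0$. Expanding $\prod_i(x_0+x_i)$, every monomial other than $\prod_i x_i$ contains a positive power of $x_0$. After multiplying by $x_0^n$, each such monomial vanishes. Therefore
\[
x_0^n\prod_{i=1}^{\ell}(x_0+x_i)\,x_i^{n-i-1}=x_0^n\prod_{i=1}^{\ell}x_i^{\,n-i},
\]
which is the stated formula, with $u_0$ denoting $u_{\chi_0}$.

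The main point needing care is the additivity $x_{\chi_0\otimes\chi_i}=x_0+x_i$. This identification of the degree-one class attached to a tensor product of characters is implicit in Proposition~\ref{action-2}. I would justify it by observing that the line bundle over $X/G$ associated to $\chi_0\otimes\chi_i$ is the tensor product of the line bundles associated to $\chi_0$ and $\chi_i$, and $w_1$ is additive on tensor products of line bundles.
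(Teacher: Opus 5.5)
Your proposal is correct and follows the paper's own argument: expand $a_V$ multiplicatively via \eqref{decom}, apply Proposition~\ref{action-2} to each factor, use additivity of the degree-one class for $\chi_0\otimes\chi_i$, and kill the cross terms with $x_0^{n+1}=0$. Your form of the key step, $x_{\chi_0\otimes\chi_i}=x_0+x_i$ with the unit $u_{\chi_0\otimes\chi_i}$ kept separate, is in fact the precise version of the paper's shorthand $a_{\chi_0\otimes\chi_i}=a_{\chi_0}+a_{\chi_i}$. Read literally, that shorthand adds classes lying in different $RO(G)$-degrees.
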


\begin{proof}
The class $a_V$ is computed multiplicatively from the Euler classes of the summands in the decomposition of $V$. More precisely,
\[
a_V
=
a_{\chi_0}^{\,n}
\prod_{i=1}^{\ell}
a_{\chi_0\otimes \chi_i}\,
a_{\chi_i}^{\,n-i-1},
\]
where $a_{\chi_i}$ denotes the Euler class of the real line bundle associated with the character $\chi_i$.

Since the first Stiefel--Whitney class is additive under tensor products of line bundles, we have
\[
a_{\chi_0\otimes \chi_i}
=
a_{\chi_0}+a_{\chi_i}.
\]
The claim now follows directly from Proposition~\ref{action-2} and the fact that $x_0^{n+1}=0$.
\end{proof}

\begin{theorem}
There does not exist any \((C_{2})^{\ell+1}\)-equivariant map
\[
   S^{n}\times V_{n, \ell} \longrightarrow S(V),
\]
with \(V\) as described in \eqref{decom}.
\end{theorem}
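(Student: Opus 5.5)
Suppose, for contradiction, that a $(C_2)^{\ell+1}$-equivariant map $f\colon X:=S^n\times V_{n,\ell}\to S(V)$ exists, where $G=(C_2)^{\ell+1}$. The plan is to compare the actions of the Euler class $a_V$ on the two sides via the induced map of $\tH^\bs_G(S^0;\uFz)$-modules. Since the collapse maps satisfy $X_+\to S(V)_+\to S^0$, the map $f_+^\bs\colon \tH^\bs_G(S(V)_+;\uFz)\to \tH^\bs_G(X_+;\uFz)$ is a map of $\tH^\bs_G(S^0;\uFz)$-modules. It also sends the unit $1$ to the unit $1$.

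\textbf{Step 1: $a_V$ vanishes on the side of $S(V)$.} Let $i\colon S(V)_+\to S^0$ be the collapse map. Consider the image $a_V\cdot 1=i^\bs(a_V)\in\tH^V_G(S(V)_+;\uFz)$. It is zero because this whole group vanishes by Proposition~\ref{psphere}. This uses $V^G=0$, which holds since every summand in \eqref{decom} is a nontrivial character. Alternatively, one can argue directly from the cofibre sequence: $i^\bs\circ j^\bs=0$ and $a_V=j^\bs(\text{Thom class})$. Either way, $a_V$ annihilates the unit of $\tH^\bs_G(S(V)_+;\uFz)$.

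\textbf{Step 2: transport to $X$.} By module linearity, $a_V\cdot 1_X=f_+^\bs(a_V\cdot 1_{S(V)})=0$ in $\tH^V_G(X_+;\uFz)$.

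\textbf{Step 3: $a_V$ is nonzero on $X$.} By the preceding Corollary, $a_V\cdot 1_X$ equals
\[
x_0^{n}\prod_{i=1}^{\ell}x_i^{\,n-i}\cdot u_{\chi_0}^{n}\prod_{i=1}^{\ell}u_{\chi_0\otimes\chi_i}u_{\chi_i}^{\,n-i-1}.
\]
The $u$-factor is a unit in the $\bigotimes\F[u_\chi^{\pm}]$ factor of Corollary~\ref{cbrecoh}. The cohomological factor $x_0^n\cdot\prod x_i^{n-i}$ is the product of two top classes:
\begin{itemize}
\item the top class of $H^*(\RR P^n;\F)$;
\item the top class of $H^*(G_\RR(1,\dots,1,n-\ell);\F)$, by Remark~\ref{rem} and Proposition~\ref{ppartialflag}.
\end{itemize}
By the Künneth tensor decomposition in Corollary~\ref{cbrecoh}, this product is a nonzero element of $H^*(\RR P^n\times G_\RR(1,\dots,1,n-\ell);\F)$. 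Hence $a_V\cdot 1_X\neq 0$, contradicting Step 2.

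\textbf{Main obstacle.} The delicate point is Step 3, namely the expansion in the Corollary. Each factor $a_{\chi_0\otimes\chi_i}$ contributes $(x_0+x_i)u_{\chi_0\otimes\chi_i}$. Expanding the product, one must check two things:
\begin{itemize}
\item every term containing an extra $x_0$ dies, because $x_0^{n+1}=0$;
\item the surviving monomial $x_0^n\prod_i x_i^{\,1+(n-i-1)}=x_0^n\prod_i x_i^{n-i}$ is really the nonzero top class.
\end{itemize}
For the second point, I would verify via Remark~\ref{rem} that $\prod x_i^{n-i}$ is not in the ideal $(f_{n-\ell+1},\dots,f_n)$. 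A dimension count supports this: the flag manifold has dimension $\sum_{i=1}^{\ell}(n-i)$, which equals the degree of this monomial. One must also make sure the relevant graded piece is honestly detected in the $E_2=E_\infty$ description of Proposition~\ref{HFPSS}, so that no extension problem identifies it with zero. Since everything is over $\F$ and the spectral sequence collapses onto a single line, this should be automatic.
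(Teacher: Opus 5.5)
Your proposal is correct and follows the paper's proof essentially step for step: $a_V\cdot 1$ vanishes in $\tH^V_G(S(V)_+;\uFz)$ by Proposition~\ref{psphere}, the $\tH^\bs_G(S^0;\uFz)$-linearity of $f^\bs$ transports this vanishing to $X$, and that contradicts the nonvanishing of $x_0^n\prod_i x_i^{n-i}$ times a unit, which follows from Corollary~\ref{cbrecoh} and Remark~\ref{rem}. Your alternative for Step 1 is a slightly leaner justification than the paper's, since it avoids Theorem~\ref{intdom}: $a_V\cdot 1$ is $i^\bs j^\bs$ applied to the generator of $\tH^V_G(S^V;\uFz)$, and this is zero because $j\circ i$ is null in the cofibre sequence.
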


\begin{proof}
Consider $G=(C_2)^{\ell+1}$. Assume for contradiction that such a map \(f\) exists.
Using the \(\tH_G^{\bs}(S^{0};\uFz)\)-module structure, 
we obtain the following commutative diagram:
\[
\xymatrix{
\tH_G^{0}\!\left(S(V);\uFz\right) \ar[r]^{f^\ast} \ar[d]_{a_V.} &
\tH_G^{0}\!\left(S^{n}\times V_{\ell}(\RR^{d});\uFz\right) \ar[d]_{a_V.} \\
\tH_G^{V}\!\left(S(V)_{+};\uFz\right) \ar[r]^{f^\bs} &
\tH_G^{V}\!\left(S^{n}\times V_{\ell}(\RR^{d});\uFz\right)
}
\]

By Proposition \ref{psphere}, we yield:
\[
0=f^\bs(a_V.1)=a_V.f^\ast(1)=x_{0}^{n}\,u_{0}^{n}
\cdot
\prod_{i=1}^{\ell}
\,x_{i}^{\,n-i}
\cdot
u_{\chi_{0}\otimes \chi_{i}}\,
u_{\chi_{i}}^{\,n-i-1}.
\]
which is non-trivial by Corollary \ref{cbrecoh} and Remark \ref{rem}. This contradicts our assumption, hence the result follows.

\end{proof}

\end{document}